\documentclass[revision]{FPSAC2025}




\newcounter{thm}
\numberwithin{thm}{section}

\theoremstyle{definition}
\newtheorem{definition}[thm]{Definition}
\AddToHook{env/definition/begin}{\crefalias{thm}{definition}}
\newtheorem{example}[thm]{Example}
\AddToHook{env/example/begin}{\crefalias{thm}{example}}

\theoremstyle{plain}
\newtheorem{theorem}[thm]{Theorem}
\AddToHook{env/theorem/begin}{\crefalias{thm}{theorem}}
\newtheorem{proposition}[thm]{Proposition}
\AddToHook{env/proposition/begin}{\crefalias{thm}{proposition}}
\newtheorem{lemma}[thm]{Lemma}
\AddToHook{env/lemma/begin}{\crefalias{thm}{lemma}}
\newtheorem{corollary}[thm]{Corollary}
\AddToHook{env/corollary/begin}{\crefalias{thm}{corollary}}
\newtheorem{conjecture}[thm]{Conjecture}
\AddToHook{env/conjecture/begin}{\crefalias{thm}{conjecture}}

\theoremstyle{remark}
\newtheorem{remark}[thm]{Remark}
\AddToHook{env/remark/begin}{\crefalias{thm}{remark}}

\usepackage{commath}

\newcommand{\NN}{\mathbb{N}}
\newcommand{\ZZ}{\mathbb{Z}}
\newcommand{\QQ}{\mathbb{Q}}
\newcommand{\RR}{\mathbb{R}}
\newcommand{\CC}{\mathbb{C}}
\newcommand{\fS}{\mathfrak{S}}
\newcommand{\cC}{\mathcal{C}}
\newcommand{\cP}{\mathcal{P}}
\newcommand{\cQ}{\mathcal{Q}}
\newcommand{\sT}{\mathsf{T}}
\DeclareMathOperator{\sgn}{sgn}
\DeclareMathOperator{\Vol}{Vol}
\DeclareMathOperator{\Ind}{Ind}
\DeclareMathOperator{\Inf}{Inf}
\DeclareMathOperator{\Res}{Res}
\DeclareMathOperator{\Stab}{Stab}

\title[The number of irreducibles in the plethysm {$s_\lambda[s_m]$}]{The number of irreducibles in the plethysm \texorpdfstring{$s_\lambda[s_m]$}{s lambda[s m]}}

\author{Ming Yean Lim\thanks{\href{mailto:mylim@umich.edu}{mylim@umich.edu}.}\addressmark{1}}

\address{\addressmark{1}Department of Mathematics, University of Michigan, Ann Arbor, MI, USA}

\received{November 8, 2024}

\revised{March 26, 2025}

\abstract{We give a formula for the number of irreducibles (with multiplicity) in the decomposition of the plethysm $s_\lambda[s_m]$ of Schur functions in terms of the number of lattice points in certain rational polytopes. In the case where $\lambda = n$ consists of a single part, we will give a combinatorial interpretation of this number as the cardinality of a set of matrices modulo permutation equivalence. This is also the setting of Foulkes' conjecture, and our results allow us to state a weaker version that only involves comparing the cardinalities of such sets, rather than the multiplicities of irreducible representations.}


\keywords{plethysm, symmetric functions, lattice point counting, representation theory of symmetric groups, wreath product.}


\usepackage{biblatex}
\addbibresource{main.bib}

\begin{document}

\maketitle

\section{Introduction}\label{sec:intro}

Let $\Lambda$ denote the ring of symmetric functions over $\QQ$. \emph{Plethysm} is a binary operation $(f, g) \mapsto f[g]$ on $\Lambda$, introduced by Littlewood \cite{littlewood1936polynomial} in 1936. In modern language, it is most easily expressed in terms of the power sum symmetric functions $p_m$ as the unique operation \cite{loehr_expose} satisfying
\begin{enumerate}
    \item for $n, m \ge 1$, $p_n[p_m] = p_{nm}$;
    \item for $m \ge 1$, $g \mapsto p_m[g]$ is a $\QQ$-algebra homomorphism $\Lambda \to \Lambda$;
    \item for $g \in \Lambda$, $f \mapsto f[g]$ is a $\QQ$-algebra homomorphism $\Lambda \to \Lambda$.
\end{enumerate}

The decomposition of the plethysm of Schur functions
\begin{equation}\label{eqn:pleth_schur}
    s_\lambda[s_\mu] = \sum_{\nu \vdash nm} a_{\lambda, \mu}^\nu s_\nu
\end{equation}
for partitions $\lambda \vdash n, \mu \vdash m$ is of particular importance. Schur functions correspond to irreducible representations of symmetric groups, and from this point of view it can be shown that the \emph{plethysm coefficients} $a_{\lambda, \mu}^\nu$ are non-negative integers \cite[Sec.~5.4]{james_kerber_reps_sym}. Various formulas and algorithms have been developed to compute these coefficients \cite{colmenarejo2022mystery, kahle_plethysm, yang_algorithm}, though from a computational complexity perspective this is known to be a hard problem in general \cite{fischer2020computational}. Recent works study plethysm via the representation theory of partition algebras \cite{bowman2024partition}, party algebras \cite{orellana2022plethysm}, and geometric complexity theory \cite{dorfler2020geometric, fischer2020computational}.

We focus on the case $\mu = m$ and study instead the sum
\begin{equation}\label{eqn:pleth_sum}
    \sum_{\nu \vdash nm} a_{\lambda,m}^\nu,
\end{equation}
which is the number of irreducibles in the decomposition \eqref{eqn:pleth_schur}, counted with multiplicity.

\subsection{Notation}

Before stating our results, we recall some notions from the representation theory of finite groups and symmetric groups to fix notation.

Unless otherwise stated, $n$ and $m$ shall denote positive integers. We use $\# S$ and $\abs{S}$ to denote the cardinality of a finite set $S$. For a finite group $G$, we let
\begin{equation*}
    \langle \chi, \phi \rangle_G = \frac{1}{\abs{G}} \sum_{g \in G} \chi(g) \overline{\phi(g)}
\end{equation*}
denote the inner product of class functions $\chi$ and $\phi$ of $G$. We omit the subscript $G$ if the group is clear from context. For a subgroup $H \le G$, we write $\Ind_H^G \chi$ for the induction of a character $\chi$ from $H$ to $G$ and $\Res_H^G \chi$ for the restriction of a character $\chi$ of $G$ to $H$. For a quotient $G \twoheadrightarrow K$, denote by $\Inf_K^G \chi$ the inflation of a character $\chi$ from $K$ to $G$.

Let $G \wr \fS_n$ denote the \emph{wreath product} of $G$ with the symmetric group $\fS_n$. By definition, it is the semidirect product $G^n \rtimes \fS_n$, with $\fS_n$ acting on $G^n$ by permuting the coordinates. Following \cite[Sec.~4.1]{james_kerber_reps_sym}, we write an element of $G \wr \fS_n$ in the form $(f; \sigma)$, where $f \in G^n$ and $\sigma \in \fS_n$. For $\sigma \in \fS_n$, we shall sometimes write $\sigma$ for $(1; \sigma)$, where $1 \in G^n$ is the identity element. We now specialize to $G = \fS_m$. For $1 \le i \le n$, define
\begin{equation}\label{eqn:def_Pi}
    \cP_i = \{(i-1)m + 1, \ldots, im\}.
\end{equation}
We have inclusions $\fS_m^n \le \fS_m \wr \fS_n \le \fS_{nm}$, where $\fS_m^n$ and $\fS_m \wr \fS_n$ are identified with the stabilizers of $(\cP_1, \ldots, \cP_n)$ and $\{\cP_1, \ldots, \cP_n\}$ respectively under the natural $\fS_{nm}$-actions.

Given a partition $\lambda$, we let $\ell(\lambda)$ denote its length. We write $m^n$ for the partition of $nm$ consisting of $n$ occurrences of $m$. We generally use $\lambda \vdash n$ to index the irreducible representations of $\fS_n$ and $\rho \vdash n$ for cycle types of elements of $\fS_n$. Thus we write $\chi^\lambda(\rho)$ for the value of the irreducible character $\chi^\lambda$ of $\fS_n$ at an element of cycle type $\rho$, see \cite[Sec.~2.3]{james_kerber_reps_sym}. In particular, recall that $\chi^n = 1$ and $\chi^{1^n} = \sgn$ are the trivial and sign characters of $\fS_n$ respectively. The plethysm coefficients in our case are then given \cite[Sec.~5.4]{james_kerber_reps_sym} by
\begin{equation*}
    a_{\lambda, m}^\nu = \langle \chi^\nu, \Ind_{\fS_m \wr \fS_n}^{\fS_{nm}} \Inf_{\fS_n}^{\fS_m \wr \fS_n} \chi^\lambda \rangle.
\end{equation*}

\subsection{Statement of results}

We now state our main results. Let $M(n, m)$ denote the set of $n \times n$ matrices with non-negative integer entries whose row and column sums are all equal to $m$. We shall identify $\fS_n$ with the group of $n \times n$ permutation matrices.

\begin{definition}
    Define the function $N^m : \fS_n \to \ZZ$ by
    \begin{equation*}
        N^m(\sigma) = \# \{A \in M(n, m) \mid \sigma A^\sT = A \}
    \end{equation*}
    for $\sigma \in \fS_n$, where $A^\sT$ denotes the transpose of $A$.
\end{definition}

It is easy to see that $N^m$ is a class function of $\fS_n$. Without the transpose, it would be the permutation character of $M(n, m)$. The main result is as follows:

\begin{theorem}\label{thm:main_thm}
    For integers $n, m \ge 1$, $N^m$ is a character of $\fS_n$. Moreover for $\lambda \vdash n$,
    \begin{equation*}
        \langle \chi^\lambda, N^m \rangle = \sum_{\nu \vdash nm} a_{\lambda, m}^\nu.
    \end{equation*}
\end{theorem}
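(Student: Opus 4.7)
The plan is to apply Frobenius reciprocity to $\sum_\nu a_{\lambda,m}^\nu$ in order to reduce the claim to an inner-product identity on $\fS_n$ against an explicit class function $T$, and then to identify $T$ with $N^m$ by a direct fibre count built from the ``overlap matrix'' $A_h$ of a permutation $h \in \fS_{nm}$. The assertion that $N^m$ is a character of $\fS_n$ follows for free, since $\langle \chi^\lambda, N^m \rangle = \sum_\nu a_{\lambda,m}^\nu$ is a non-negative integer for each $\lambda \vdash n$, forcing $N^m$ to be a non-negative integer combination of the $\chi^\lambda$.

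For the reduction, set $\Theta := \sum_{\nu \vdash nm} \chi^\nu$. Summing the plethysm formula over $\nu$ and applying Frobenius reciprocity gives
$$\sum_\nu a_{\lambda,m}^\nu \;=\; \langle \Res_{\fS_m \wr \fS_n}^{\fS_{nm}} \Theta,\ \Inf_{\fS_n}^{\fS_m \wr \fS_n} \chi^\lambda \rangle_{\fS_m \wr \fS_n}.$$
Since the inflated character takes value $\chi^\lambda(\sigma)$ on $(f; \sigma)$, depending only on the $\fS_n$-part, fibering the inner product over $\sigma$ yields $\sum_\nu a_{\lambda,m}^\nu = \langle \chi^\lambda, T \rangle_{\fS_n}$ with
$$T(\sigma) \;=\; \frac{1}{(m!)^n} \sum_{f \in \fS_m^n} \Theta(f; \sigma).$$
Every irreducible representation of $\fS_{nm}$ is defined over $\QQ$, so all Frobenius-Schur indicators equal $+1$, and the Frobenius-Schur formula gives $\Theta(g) = \#\{h \in \fS_{nm} : h^2 = g\}$. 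As the coset $\fS_m^n \cdot (1; \sigma)$ is precisely the set of elements of $\fS_m \wr \fS_n$ whose block permutation is $\sigma$, the theorem reduces to the combinatorial identity
$$\#\bigl\{\, h \in \fS_{nm} \;:\; h^2(\cP_j) = \cP_{\sigma(j)} \text{ for all } j \,\bigr\} \;=\; (m!)^n \, N^m(\sigma).$$

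For this identity I would associate to each $h \in \fS_{nm}$ the overlap matrix $(A_h)_{ij} = |h^{-1}(\cP_i) \cap \cP_j|$ together with the refined data $S_{ij}(h) := h^{-1}(\cP_i) \cap \cP_j \subseteq \cP_j$, so that $h$ is fully encoded by the partitions $\cP_j = \bigsqcup_i S_{ij}$ and the bijections $h|_{S_{ij}} : S_{ij} \to h(S_{ij}) \subseteq \cP_i$. The key step is that the block identity $h^2(\cP_j) = \cP_{\sigma(j)}$ --- equivalently $h(\cP_j) = h^{-1}(\cP_{\sigma(j)})$ --- upon intersecting with each $\cP_k$ forces both the numerical constraint $\sigma A_h^\sT = A_h$ and the sharper set-level equality $h(S_{ij}) = S_{\sigma(j), i}$ inside $\cP_i$. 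Conversely, given $A \in M(n,m)$ with $\sigma A^\sT = A$, the admissible $h$'s with $A_h = A$ are enumerated by first choosing each partition $\cP_j = \bigsqcup_i S_{ij}$ in $\prod_j \frac{m!}{\prod_i A_{ij}!}$ ways, and then for each pair $(i,j)$ picking an arbitrary bijection $S_{ij} \to S_{\sigma(j), i}$ in $A_{ij}!$ ways. The multinomial and factorial factors cancel and the count collapses to $(m!)^n$ independently of $A$; summing over $A$ closes the identity. I expect the main obstacle to be the upgrade from the numerical $\sigma A_h^\sT = A_h$ to the sharp set-level $h(S_{ij}) = S_{\sigma(j), i}$, on which the whole count hinges --- this refinement both pins down the image of each piece $S_{ij}$ under $h$ and guarantees that the partition $\{h(S_{ij})\}_j$ of $\cP_i$ agrees with the partition $\{S_{l, i}\}_l$ already selected, so that the partition and bijection choices decouple cleanly.
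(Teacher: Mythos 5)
Your proposal is correct and follows essentially the same route as the paper: the same reduction via $\Theta=\sum_\nu\chi^\nu$, the Frobenius--Schur square-root count, and Frobenius reciprocity, followed by the same $m!^n$-to-one fibre count over matrices $A$ with $\sigma A^\sT=A$ (your sets $S_{ij}=h^{-1}(\cP_i)\cap\cP_j$ are just the images under $h^{-1}$ of the paper's $\cQ_{ij}=\cP_i\cap\tau\cP_j$, and your "set-level upgrade" is exactly the paper's observation that $\tau\cQ_{ij}=\cQ_{\sigma(j)i}$). The counting of partitions and bijections collapsing to $(m!)^n$ is identical, so the proposal is sound as written.
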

We shall prove this result in the next section. As we shall explain further in \cref{sec:properties}, for fixed $\sigma \in \fS_n$, $N^m(\sigma)$ is an Ehrhart quasipolynomial in $m$. Thus for fixed $\lambda \vdash n$, the sum \eqref{eqn:pleth_sum} is quasipolynomial in $m$. Related to this, according to \cite{kahle2018obstructions}, the function $s \mapsto a_{n, sm}^{s \nu}$ is a quasipolynomial by a deep result, but is not an Ehrhart quasipolynomial. In the case $n=3$, these quasipolynomials were computed in \cite{agaoka_decomposition} using the decomposition of $s_\lambda[s_m]$ for $\lambda \vdash 3$. With the aid of a computer, \cref{thm:main_thm} enables us to compute the quasipolynomials \eqref{eqn:pleth_sum} for all $\lambda \vdash n$ with $n \le 6$.

\begin{example}
    For $n=6$ and $\lambda = 6$, we have computed using SageMath \cite{sagemath} that
    \begin{align*}
        \sum_{\nu \vdash 6m} a_{6, m}^\nu ={}& \frac{243653}{1434705592320000} m^{15} + \frac{243653}{31882346496000} m^{14} + \frac{91173671}{573882236928000} m^{13} \\
        \phantom{\sum_{\nu \vdash 6m} a_{6, m}^\nu} &+ \frac{5954623}{2942985830400} m^{12} + \frac{3895930519}{220723937280000} m^{11} + \frac{149644967}{1337720832000} m^{10} \\
        &+ \frac{1072677673}{2006581248000} m^9 + \frac{14723521}{7431782400} m^8 + \frac{350041981}{59719680000} m^7 + O(m^6),
    \end{align*}
    where we have omitted the trailing terms whose coefficients have period greater than $1$.
\end{example}

Asymptotics of \eqref{eqn:pleth_sum} as $m \to \infty$ were studied in \cite{fulger_asymptotics}. We recover and slightly extend one of their results in \cref{thm:asymptotics} by studying the dimensions of the polytopes involved.

We call two matrices $A, B \in M(n, m)$ \emph{permutation equivalent} and write $A \sim B$ if $A$ can be transformed into $B$ by row and column permutations. Let $T(n, m) = \{A \in M(n, m) \mid A \sim A^\sT\}$ denote the subset of matrices that are permutation equivalent to their transpose. A major open problem in algebraic combinatorics is to find a combinatorial interpretation of plethysm coefficients $a_{\lambda, \mu}^\nu$ \cite[Problem 9]{stanley_positivity}. We have the following combinatorial interpretation of the sum \eqref{eqn:pleth_sum} in the case $\lambda = n$:

\begin{theorem}\label{thm:combinatorial_interpretation}
    When $\lambda = n$, the sum \eqref{eqn:pleth_sum} is equal to the cardinality of $T(n, m) / {\sim}$:
    \begin{equation*}
        \sum_{\nu \vdash nm} a_{n,m}^\nu = \langle 1, N^m \rangle_{\fS_n} = \# T(n, m) / {\sim}.
    \end{equation*}
\end{theorem}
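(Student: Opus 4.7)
The first equality is immediate from Theorem~\ref{thm:main_thm} applied with $\lambda = n$, since $\chi^n = 1$. For the second equality, my plan is to apply Burnside's lemma to a suitable enlargement of the permutation-equivalence action on $M(n, m)$.

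Concretely, I would form the semidirect product $\tilde G := (\fS_n \times \fS_n) \rtimes \ZZ/2$, in which the generator $\iota$ of $\ZZ/2$ swaps the two factors. Let $\tilde G$ act on $M(n, m)$ by $(\sigma, \tau) \cdot A = \sigma A \tau^{-1}$ and $\iota \cdot A = A^\sT$. This is well-defined because $(\sigma A \tau^{-1})^\sT = \tau A^\sT \sigma^{-1}$, so the transpose intertwines the two $\fS_n$-factors.

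I would then count the $\tilde G$-orbits on $M(n, m)$ in two ways. Geometrically, each $\tilde G$-orbit is either a single permutation-equivalence class (when $[A]_\sim = [A^\sT]_\sim$, i.e.\ when $A \in T(n, m)$) or the union of two distinct $\sim$-classes interchanged by transpose; this gives $\tfrac{1}{2}\bigl(|M(n,m)/{\sim}| + \#T(n,m)/{\sim}\bigr)$ orbits in total. By Burnside's lemma, this also equals $\tfrac{1}{2(n!)^2}\sum_{g \in \tilde G} |\mathrm{Fix}(g)|$. The $\fS_n \times \fS_n$-part of the sum contributes $(n!)^2 |M(n,m)/{\sim}|$ by the usual Burnside count of permutation-equivalence classes. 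For the other coset, $(\sigma, \tau)\iota$ fixes $A$ precisely when $\sigma A^\sT \tau^{-1} = A$; the substitution $B := A\tau$ (a bijection on $M(n, m)$) transforms this into $(\sigma\tau) B^\sT = B$, so the number of fixed points equals $N^m(\sigma\tau)$. Summing over $(\sigma, \tau)$ and reindexing by $\rho = \sigma\tau$ yields $n! \sum_\rho N^m(\rho) = (n!)^2 \langle 1, N^m \rangle$.

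Equating the two expressions for the orbit count and cancelling the common $|M(n,m)/{\sim}|$ term yields $\langle 1, N^m \rangle = \#T(n,m)/{\sim}$. The main bookkeeping step is the substitution $B = A\tau$ in the fixed-point equation, together with verifying compatibility between transpose and the $\fS_n \times \fS_n$-action; both reduce to direct manipulations with permutation matrices.
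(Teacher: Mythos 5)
Your proof is correct. The first equality is handled exactly as in the paper (specialize \cref{thm:main_thm} to $\lambda = n$), but for the second equality you take a genuinely different route. The paper deduces it from \cref{thm:not_burnside_analog}, which is itself obtained orbit-by-orbit: one decomposes $X = M(n,m)$ into $(\fS_n \times \fS_n)$-orbits, shows via \cref{prop:Ns_inner_prod} and \cref{cor:N_inner_product_triv} that each orbit contributes $1$ or $0$ to $\langle 1, N^m\rangle$ according to whether its matrices satisfy $A \sim A^\sT$, and sums. You instead apply the classical Cauchy--Frobenius lemma globally to the group $(\fS_n \times \fS_n) \rtimes \ZZ/2$ (which is precisely the wreath product $\fS_n \wr C_2$ of \cref{sec:wr_sets}) acting on $M(n,m)$, count its orbits in two ways, and cancel the common term $\# M(n,m)/{\sim}$; the key computations check out: transpose intertwines the two factors since $(\sigma A \tau^{-1})^\sT = \tau A^\sT \sigma^{-1}$ for permutation matrices, the substitution $B = A\tau$ turns the fixed-point equation $\sigma A^\sT \tau^{-1} = A$ into $(\sigma\tau)B^\sT = B$, and each $\rho \in \fS_n$ arises as $\sigma\tau$ from exactly $n!$ pairs, giving the coset contribution $(n!)^2 \langle 1, N^m\rangle$. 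What each approach buys: yours is elementary and self-contained (no character theory of $G \wr C_2$, no stabilizer bookkeeping), and it generalizes verbatim to any finite $(G \wr C_2)$-set, so it reproves the counting identity of \cref{thm:not_burnside_analog}; the paper's orbit-wise method is finer, producing the virtual characters $N^s$ and the inner-product formula of \cref{prop:Ns_inner_prod}, which the paper needs later (e.g.\ for \cref{prop:NC_triv} and the analysis in \cref{sec:m2}), information your global double count does not recover since it only extracts the pairing with the trivial character.
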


\begin{example}
    We compute that $s_3[s_3] = s_{9} + s_{72} + s_{63} + s_{522} + s_{441}$. Correspondingly, there are five elements of $T(3,3) / {\sim}$, represented by
    \begin{equation*}
        \begin{pmatrix}
            3 & 0 & 0 \\
            0 & 3 & 0 \\
            0 & 0 & 3
        \end{pmatrix},
        \begin{pmatrix}
            3 & 0 & 0 \\
            0 & 2 & 1 \\
            0 & 1 & 2
        \end{pmatrix},
        \begin{pmatrix}
            2 & 1 & 0 \\
            1 & 1 & 1 \\
            0 & 1 & 2
        \end{pmatrix},
        \begin{pmatrix}
            2 & 1 & 0 \\
            1 & 0 & 2 \\
            0 & 2 & 1
        \end{pmatrix},
        \begin{pmatrix}
            1 & 1 & 1 \\
            1 & 1 & 1 \\
            1 & 1 & 1
        \end{pmatrix}.
    \end{equation*}
\end{example}

In the above setting, Foulkes \cite{foulkes_concomitants} conjectured in 1950 that if $n \le m$, then $a_{n,m}^\nu \le a_{m,n}^\nu$. Brion \cite{brion1993stable} proved this for $n \ll m$ in 1993. Among recent works on Foulkes' conjecture, \cite{dorfler2020geometric} verifies it for an infinite family, and a stable version was proven in \cite{bowman2024partition} via partition algebras. Together with the previous theorem, Foulkes' conjecture implies

\begin{conjecture}\label{conj:T_ineq}
    If $n \le m$, then $\# T(n, m) / {\sim} \le \# T(m, n) / {\sim}$.
\end{conjecture}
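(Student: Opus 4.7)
\medskip

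\noindent \textbf{Proof proposal.} By \cref{thm:combinatorial_interpretation}, \cref{conj:T_ineq} is the termwise-summed form of Foulkes' conjecture $\sum_\nu a_{n,m}^\nu \le \sum_\nu a_{m,n}^\nu$, and in particular follows immediately from Foulkes. Any unconditional proof must therefore extract the summed inequality without going through the termwise version. The natural plan is to construct an explicit injection
\[
    \varphi \colon T(n,m)/{\sim} \hookrightarrow T(m,n)/{\sim}
\]
via the bipartite-multigraph interpretation: a matrix $A \in M(n,m)$ encodes an $m$-regular bipartite multigraph $G_A$ on $[n] \sqcup [n]$, transposition corresponds to the side-swap, and $\sim$ to independent side-relabelings, so $T(n,m)/{\sim}$ classifies those multigraphs isomorphic to their side-swap --- call them flip-symmetric.

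The candidate $\varphi$ should convert a flip-symmetric $m$-regular bipartite multigraph on $[n]\sqcup[n]$ into a flip-symmetric $n$-regular one on $[m]\sqcup[m]$. The most concrete starting point: at each left-vertex and each right-vertex of $G_A$, label the $m$ incident edges by $[m]$, then let $B_{kl}$ count edges whose label pair is $(k,l)$. A direct check shows $B \in M(m,n)$, but $B$ depends on $(m!)^{2n}$ bijection choices. The technical heart is to show that the induced class in $M(m,n)/{\sim}$ is independent of choices and that flip-symmetry of $G_A$ descends to flip-symmetry of $B$. A variant is to pick a Birkhoff--von Neumann decomposition $A = P_{\sigma_1}+\cdots+P_{\sigma_m}$ and extract a choice-invariant cross-statistic of the sequence $(\sigma_1, \ldots, \sigma_m)$. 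Small cases accessible via \cref{thm:main_thm} for $n \le 6$ should be used to test candidate statistics empirically before committing to one.

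For the asymptotic range, \cref{thm:asymptotics} identifies the leading term of $\sum_\nu a_{n,m}^\nu$ as a polytope volume, and computing the analogous leading term of $\sum_\nu a_{m,n}^\nu$ as a function of $n$ with $m$ fixed reduces the conjecture for $m \gg n$ to a polytope-volume inequality. This range is already subsumed by Brion's theorem \cite{brion1993stable}, so the novelty would lie in making the stable threshold explicit enough to close the remaining finite window by direct computation with \cref{thm:main_thm}.

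The main obstacle is the injection itself. The $(m!)^{2n}$-fold ambiguity in any labeling-based construction must collapse under the two-sided $\fS_m \times \fS_m$-action on the target: changing the edge labeling at a single left-vertex generally does not correspond to a permutation equivalence of the whole output matrix, so no obvious choice-independent statistic presents itself. The known proofs of Foulkes-type inequalities --- Brion's GIT argument and the partition-algebra stability of \cite{bowman2024partition} --- do not manifestly descend to combinatorial injections between matrix sets, so genuinely new input seems required; even a non-canonical injection merely witnessing the inequality would be a significant step.
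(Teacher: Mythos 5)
The statement you are addressing is stated in the paper as a \emph{conjecture}, and the paper offers no proof of it: its only justification is exactly the observation in your first sentence, namely that \cref{thm:combinatorial_interpretation} converts Foulkes' conjecture $a_{n,m}^\nu \le a_{m,n}^\nu$ (summed over $\nu$) into the inequality $\# T(n,m)/{\sim} \le \# T(m,n)/{\sim}$. So the conditional part of your proposal coincides with the paper's own reasoning, but it is conditional on an open conjecture and therefore is not a proof of \cref{conj:T_ineq}.

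The unconditional part of your proposal is a research program rather than an argument, and by your own admission its central step is missing. The labeling-based map $A \mapsto B$ is not well defined on classes: you correctly note that the $(m!)^{2n}$-fold choice of edge labelings does not collapse under the $\fS_m \times \fS_m$ action on the target, and no choice-independent statistic is produced; the Birkhoff--von Neumann variant likewise names no concrete invariant, so no injection $T(n,m)/{\sim} \hookrightarrow T(m,n)/{\sim}$ is actually constructed. The asymptotic route also does not close: \cref{thm:asymptotics} controls $\sum_\nu a_{n,m}^\nu$ for fixed $n$ as $m \to \infty$, but the right-hand side $\# T(m,n)/{\sim}$ lives in the opposite regime (matrix size $m$, row sum $n$), so it is not governed by the same polytope $\cP(1_{\fS_n})$, and in any case Brion's theorem \cite{brion1993stable} gives no explicit stable threshold; even if it did, the ``remaining finite window'' is finite only for each fixed $n$, so closing it by computation via \cref{thm:main_thm} would require infinitely many computations. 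In short, nothing beyond the Foulkes-conditional implication is established, which is consistent with the paper, where the statement remains open.
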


\section{Proof of \texorpdfstring{\cref{thm:main_thm}}{Theorem 1.2}}\label{sec:main_prf}

In this section we prove \cref{thm:main_thm}. Define the function $\theta : \fS_{nm} \to \CC$ by
\begin{equation*}
    \theta(\sigma) = \# \{\tau \in \fS_{nm} \mid \tau^2 = \sigma\}.
\end{equation*}
It is well-known that the irreducible representations of $\fS_{nm}$ can be realized over $\RR$, see e.g.~\cite[Theorem 2.1.12]{james_kerber_reps_sym}. Hence \cite[Corollary 23.17]{james_liebeck_reps_groups} implies
\begin{equation*}
    \theta = \sum_{\nu \vdash nm} \chi^\nu,
\end{equation*}
and is in particular a character. We compute by Frobenius reciprocity
\begin{align*}
    \sum_{\nu \vdash nm} a_{\lambda, m}^\nu
    &= \langle \theta, \Ind_{\fS_m \wr \fS_n}^{\fS_{nm}} \Inf_{\fS_n}^{\fS_m \wr \fS_n} \chi^\lambda \rangle
    = \langle \Res_{\fS_m \wr \fS_n}^{\fS_{nm}} \theta, \Inf_{\fS_n}^{\fS_m \wr \fS_n} \chi^\lambda \rangle \\
    &= \frac{1}{m!^n n!}  \sum_{(f; \sigma) \in \fS_m \wr \fS_n} \theta(f; \sigma) \chi^{\lambda}(\sigma)
    = \frac{1}{n!} \sum_{\sigma \in \fS_n} \left(\frac{1}{m!^n} \sum_{f \in \fS_m^n} \theta(f; \sigma) \right) \chi^{\lambda}(\sigma) .
\end{align*}
The expression in parentheses simplifies as
\begin{equation*}
    \frac{1}{m!^n} \sum_{f \in \fS_m^n} \theta(f; \sigma)
    = \frac{1}{m!^n} \sum_{f \in \fS_m^n} \# \{\tau \in \fS_{nm} \mid \tau^2 = (f; \sigma)\}
    = \frac{1}{m!^n} \# \{\tau \in \fS_{nm} \mid \tau^2 \sigma^{-1} \in \fS_m^n \} .
\end{equation*}
Thus it suffices to show that for $\sigma \in \fS_n$,
\begin{equation*}
    \frac{1}{m!^n} \# \{\tau \in \fS_{nm} \mid \tau^2 \sigma^{-1} \in \fS_m^n \} = N^m(\sigma) .
\end{equation*}
Now fix $\sigma \in \fS_n$ and recall the definition of $\cP_i$ in \eqref{eqn:def_Pi}. The proof is completed by
\begin{lemma}
    The map $F : \{\tau \in \fS_{nm} \mid \tau^2 \sigma^{-1} \in \fS_m^n \} \to \{A \in M(n, m) \mid \sigma A^\sT = A \}$ given by
\begin{equation*}
    F(\tau)_{ij} = \# (\cP_i \cap \tau \cP_j)
\end{equation*}
is $m!^n$-to-$1$ and surjective.
\end{lemma}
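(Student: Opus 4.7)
The plan is to prove the lemma in two stages: first check that $F$ genuinely lands in the target set, then exhibit an explicit parameterization of each fiber that makes both surjectivity and the $m!^n$-to-$1$ count transparent.

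For well-definedness, I would start by observing that if $\tau \in \fS_{nm}$ then $F(\tau) \in M(n,m)$ automatically, since $\{\cP_1, \ldots, \cP_n\}$ partitions $\{1, \ldots, nm\}$ into blocks of size $m$, so $\sum_i F(\tau)_{ij} = |\cP_j| = m$ and $\sum_j F(\tau)_{ij} = |\cP_i| = m$. The substantive content is the symmetry $\sigma F(\tau)^{\sT} = F(\tau)$. Unpacking $\sigma$ as a permutation matrix, this equation reads $F(\tau)_{ij} = F(\tau)_{j, \sigma^{-1}(i)}$. Because $\tau^2 \sigma^{-1} \in \fS_m^n$ stabilizes every $\cP_k$, we have $\tau^2 \cP_k = \cP_{\sigma(k)}$, hence $\tau \cP_{\sigma^{-1}(i)} = \tau^{-1} \cP_i$. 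Then
\begin{equation*}
    F(\tau)_{j, \sigma^{-1}(i)} = \#\bigl(\cP_j \cap \tau \cP_{\sigma^{-1}(i)}\bigr) = \#\bigl(\cP_j \cap \tau^{-1} \cP_i\bigr) = \#\bigl(\tau \cP_j \cap \cP_i\bigr) = F(\tau)_{ij},
\end{equation*}
as required.

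The key move for the rest is to introduce, for any $\tau \in \fS_{nm}$, the refinement $W_{ij} := \cP_i \cap \tau \cP_j$, so that $\{W_{ij}\}_j$ partitions $\cP_i$ and $|W_{ij}| = F(\tau)_{ij}$. When $\tau^2 \cP_j = \cP_{\sigma(j)}$, a direct computation shows $\tau(W_{ij}) \subseteq \cP_i \cap \tau^{-1}\cP_{\sigma(j)} = W_{\sigma(j), i}$, and comparing sizes (using $F(\tau)_{ij} = F(\tau)_{\sigma(j), i}$, which is exactly the symmetry established above) forces $\tau(W_{ij}) = W_{\sigma(j),i}$. So each $\tau$ in the source is completely encoded by (a) the partition of each $\cP_i$ into the pieces $W_{ij}$ of prescribed sizes $A_{ij} = F(\tau)_{ij}$, together with (b) a bijection $W_{ij} \to W_{\sigma(j),i}$ for each $(i,j)$.

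For surjectivity and the fiber count I would reverse this encoding. Given $A \in M(n,m)$ with $\sigma A^{\sT} = A$, so that $A_{\sigma(j), i} = A_{ij}$, I choose for each $i$ an arbitrary ordered partition $\cP_i = \bigsqcup_j W_{ij}$ with $|W_{ij}| = A_{ij}$, and for each $(i,j)$ an arbitrary bijection $W_{ij} \to W_{\sigma(j),i}$ (well-defined by the symmetry of $A$). Gluing these bijections defines $\tau \in \fS_{nm}$; the index map $(i,j) \mapsto (\sigma(j),i)$ is a bijection on $\{1,\ldots,n\}^2$, so $\tau$ is indeed a permutation. A short check shows $F(\tau) = A$ and $\tau^2 \cP_i = \cP_{\sigma(i)}$, and conversely the previous paragraph shows every $\tau \in F^{-1}(A)$ arises this way from unique data. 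Multiplying the counts gives
\begin{equation*}
    \prod_{i=1}^n \binom{m}{A_{i1},\ldots, A_{in}} \cdot \prod_{i,j} A_{ij}! = \frac{m!^n}{\prod_{i,j} A_{ij}!} \cdot \prod_{i,j} A_{ij}! = m!^n,
\end{equation*}
proving both surjectivity and that $F$ is $m!^n$-to-$1$.

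The only real obstacle is purely notational: tracking how $\sigma$ acts on the block indices and on matrix indices simultaneously, and verifying that the relation $\tau(W_{ij}) = W_{\sigma(j),i}$ meshes consistently with the involution-like condition $\tau^2 \cP_i = \cP_{\sigma(i)}$. Everything else is bookkeeping.
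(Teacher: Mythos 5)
Your argument is essentially the paper's proof: the same refinement $\cQ_{ij} = \cP_i \cap \tau\cP_j$ (your $W_{ij}$), the same encoding of $\tau$ by ordered partitions of each $\cP_i$ of block sizes $A_{ij}$ together with bijections $W_{ij} \to W_{\sigma(j),i}$, and the same count $\prod_{i}\binom{m}{A_{i1},\ldots,A_{in}}\prod_{i,j}A_{ij}! = m!^n$; the only real difference is that you verify $\sigma F(\tau)^\sT = F(\tau)$ by a direct set computation rather than reading it off the bijections, which is fine. One local slip should be fixed: the claim $\tau(W_{ij}) \subseteq \cP_i \cap \tau^{-1}\cP_{\sigma(j)} = W_{\sigma(j),i}$ is wrong as written, since $\cP_i \cap \tau^{-1}\cP_{\sigma(j)} = \tau^{-1}\bigl(W_{\sigma(j),i}\bigr)$ and the stated containment already fails for $n=2$, $m=1$, $\sigma = 1$, $\tau = (1\,2)$. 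The intended conclusion is nevertheless immediate and needs no size-comparison step: $\tau(W_{ij}) = \tau\cP_i \cap \tau^2\cP_j = \tau\cP_i \cap \cP_{\sigma(j)} = W_{\sigma(j),i}$, exactly as in the paper, and with this correction the rest of your parameterization and fiber count goes through unchanged.
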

\begin{proof}
    We first show that $F$ has the stated codomain. Let $\tau \in \fS_{nm}$ be such that $\tau^2 \sigma^{-1} \in \fS_m^n$, or equivalently $\tau^2 \cP_i = \cP_{\sigma(i)}$ for each $1 \le i \le n$. Let $A = F(\tau)$ and for $1 \le i, j \le n$, set $\cQ_{ij} = \cP_i \cap \tau \cP_j$. Then
    \begin{enumerate}[label=(\roman*)]
        \item\label{item:part} for each $1 \le i \le n$, $\{\cQ_{ij} \mid 1 \le j \le n\}$ is a partition of $\cP_i$ with $\# \cQ_{ij} = A_{ij}$; and
        \item\label{item:bij} for each $1 \le i, j \le n$, $\tau$ restricts to a bijection $\cQ_{ij} \to \cQ_{\sigma(j)i}$.
    \end{enumerate}
    Indeed \ref{item:part} is clear, and \ref{item:bij} follows from $\tau \cQ_{ij} = \tau \cP_i \cap \tau^2 \cP_j = \tau \cP_i \cap \cP_{\sigma(j)} = \cQ_{\sigma(j)i}$. It follows from \ref{item:part} that $A$ has row sums equal to $m$. The condition $\sigma A^\sT = A$ is equivalent to $A_{ij} = A_{\sigma(j) i}$ for $1 \le i, j \le n$, which holds by \ref{item:bij}.

    Conversely let $A \in M(n, m)$ be such that $\sigma A^\sT = A$. We want to count $\tau \in \fS_{nm}$ such that $\tau^2 \sigma^{-1} \in \fS_m^n$ and $F(\tau) = A$. Reversing the above, giving such $\tau$ is equivalent to giving $\cQ_{ij}$ for $1 \le i, j \le n$ satisfying \ref{item:part}, and bijections $\cQ_{ij} \to \cQ_{\sigma(j) i}$ as in \ref{item:bij}. There are
    \begin{equation*}
        \binom{m}{A_{i1}, \ldots, A_{in}}
    \end{equation*}
    partitions of $\cP_i$ satisfying \ref{item:part}, and $A_{ij}!$ bijections in \ref{item:bij} since $A_{ij} = A_{\sigma(j) i}$. Thus there are
    \begin{equation*}
        \prod_{i=1}^n \binom{m}{A_{i1}, \ldots, A_{in}} \prod_{1 \le i,j \le n} A_{ij}! = m!^n
    \end{equation*}
    such $\tau$ as required.
\end{proof}

\section{\texorpdfstring{$(G \wr C_2)$}{(G wr C2)}-sets and representations}\label{sec:wr_sets}

Let $G$ be a finite group and let $C_2$ denote the cyclic group of order $2$, whose generator we shall suggestively denote by $\sT$. Elements of $G \wr C_2$ have the form $(g, h; 1)$ or $(g, h; \sT)$ for $g, h \in G$. In this section we collect some results on $(G \wr C_2)$-actions and representations. We will obtain \cref{thm:combinatorial_interpretation} as a consequence of more general results.

\subsection{Irreducible characters of \texorpdfstring{$G \wr C_2$}{G wr C2}}

We describe the irreducible characters of $G \wr C_2$ in terms of the irreducible characters of $G$. For characters $\chi_1$ and $\chi_2$ of $G$, let $\chi_1 \boxtimes \chi_2$ denote their external tensor product. It is a character of $G \times G$ with
\begin{equation*}
    (\chi_1 \boxtimes \chi_2)(g, h) = \chi_1(g) \chi_2(h).
\end{equation*}
Suppose $V$ is a representation of $G$. Then $V \otimes V$ is a representation of $G \wr C_2$ with action
\begin{equation*}
    (g, h; 1) (v \otimes w) = gv \otimes hw; \qquad
    (1, 1; \sT) (v \otimes w) = w \otimes v.
\end{equation*}
If $\chi$ is the character of $V$, then the character $\widetilde{\chi}$ of $V \otimes V$ is given by
\begin{equation*}
    \widetilde{\chi}(g, h; 1) = \chi(g) \chi(h); \qquad
    \widetilde{\chi}(g, h; \sT) = \chi(gh).
\end{equation*}
Thus given a character $\chi$ of $G$, we can define the character $\widetilde{\chi}$ of $G \wr C_2$ by the above. By \cite[Theorem 4.4.3]{james_kerber_reps_sym}, there are three types of irreducible characters of $G \wr C_2$, these are:
\begin{enumerate}
    \item $\widetilde{\chi}$, where $\chi$ is an irreducible character of $G$;

    \item $\widetilde{\chi} \cdot \Inf_{C_2}^{G \wr C_2} \sgn$, where $\chi$ is an irreducible character of $G$;

    \item $\Ind_{G \times G}^{G \wr C_2} (\chi_1 \boxtimes \chi_2)$, where $\chi_1, \chi_2$ are distinct irreducible characters of $G$.
\end{enumerate}
By a case analysis, we obtain
\begin{proposition}\label{prop:twisted_virtual_char}
    Let $\psi$ be a character of $G \wr C_2$. Then $\psi^{\sT}(g) = \psi(g, 1; \sT)$ is a virtual character of $G$. Moreover if $\chi$ is an irreducible character of $G$, then
    \begin{equation*}
        \langle \chi, \psi^\sT \rangle = \langle \widetilde{\chi}, \psi \rangle - \langle \widetilde{\chi} \cdot \Inf_{C_2}^{G \wr C_2} \sgn, \psi \rangle.
    \end{equation*}
\end{proposition}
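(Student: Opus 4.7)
The plan is to exploit linearity in $\psi$: both $\psi^\sT$ and the displayed right-hand side are $\ZZ$-linear in $\psi$, so it suffices to verify the identity when $\psi$ ranges over the three families of irreducible characters of $G \wr C_2$ listed just before the proposition. The assertion that $\psi^\sT$ is a virtual character of $G$ will then follow automatically from the case analysis, since in each case $\psi^\sT$ will turn out to be a signed irreducible character of $G$ (or zero).

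For families (1) and (2) the computation is essentially one line. If $\psi = \widetilde{\chi_0}$ for an irreducible character $\chi_0$ of $G$, then $\widetilde{\chi_0}(g, 1; \sT) = \chi_0(g \cdot 1) = \chi_0(g)$, so $\psi^\sT = \chi_0$. If $\psi = \widetilde{\chi_0} \cdot \Inf_{C_2}^{G \wr C_2}\sgn$, then the extra factor contributes $\sgn(\sT) = -1$ and $\psi^\sT = -\chi_0$. Orthogonality of the three families of irreducibles of $G \wr C_2$ then collapses the right-hand side to $+\delta_{\chi, \chi_0}$ and $-\delta_{\chi, \chi_0}$ respectively, matching $\langle \chi, \pm\chi_0 \rangle$.

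The main obstacle is family (3), where $\psi = \Ind_{G \times G}^{G \wr C_2}(\chi_1 \boxtimes \chi_2)$ with $\chi_1 \ne \chi_2$. I expect to show $\psi^\sT = 0$ by a short conjugation argument: a direct computation with the wreath product multiplication shows that any conjugate of $(g, 1; \sT)$ in $G \wr C_2$ still has $\sT$ in its $C_2$-component, so no conjugate lies in the subgroup $G \times G$, and the standard formula for an induced character forces $\psi(g, 1; \sT) = 0$. For the right-hand side, Frobenius reciprocity reduces the two inner products to inner products over $G \times G$. Using $\Res_{G \times G}^{G \wr C_2}\widetilde{\chi} = \chi \boxtimes \chi$ and $\Res_{G \times G}^{G \wr C_2}\Inf_{C_2}^{G \wr C_2}\sgn = 1$, both terms become $\langle \chi, \chi_1 \rangle \langle \chi, \chi_2 \rangle$, which vanishes because $\chi_1 \ne \chi_2$ prevents $\chi$ from coinciding with both. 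The two sides therefore both equal $0$, completing the case. None of the three cases involves more than routine character manipulation; the only tactical point is the reduction to irreducible $\psi$, which is what makes the claim about $\psi^\sT$ being a virtual character essentially free.
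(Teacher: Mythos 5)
Your proposal is correct and follows essentially the same route as the paper, which proves the proposition by exactly this case analysis over the three families of irreducible characters of $G \wr C_2$ (reducing to irreducible $\psi$ by linearity). The verifications in each case — $\psi^\sT = \chi_0$, $-\chi_0$, or $0$, with the induced-character vanishing argument and Frobenius reciprocity in the third case — are accurate.
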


\subsection{Orbit-counting with a twist}

Let $X$ be a finite $(G \wr C_2)$-set. For $x \in X$ and $g, h \in G$, we write $g x = (g, 1; 1) x$, $x h = (1, h^{-1}; 1) x$, and $x^\sT = (1, 1; \sT) x$. In this way, $X$ is equipped with left and right $G$-actions and an involution compatible with each other, i.e.
\begin{enumerate}
    \item $g(xh) = (gx)h$, which we simply denote by $gxh$; and
    \item $(gxh)^\sT = h^{-1} x^\sT g^{-1}$.
\end{enumerate}
Conversely given $X$ with left and right $G$-actions and an involution $x \mapsto x^\sT$ satisfying the above, we have a $(G \wr C_2)$-action on $X$.

Let $(G \times G) \backslash X$ denote the set of $(G \times G)$-orbits of $X$. For $x, y \in X$, write $x \sim y$ if they lie in the same $(G \times G)$-orbit, i.e.~if $(G \times G) \cdot x = (G \times G) \cdot y$. The set $(G \times G) \backslash X$ inherits a $C_2$-action, given by
\begin{equation*}
    ((G \times G) \cdot x)^{\sT} = (G \times G) \cdot x^\sT .
\end{equation*}
We let $((G \times G) \backslash X)^{C_2}$ denote the subset of fixed points under the $C_2$-action. This is a set we are interested in counting. Our main example is the following:

\begin{example}\label{ex:GX}
    Let $G = \fS_n$ and $X = M(n, m)$. We have a natural $(\fS_n \wr C_2)$-action on $X$, where $\fS_n$ acts by permuting rows on the left and permuting columns on the right, while $\sT$ acts by taking transposes. Two matrices $A, B \in M(n, m)$ are permutation equivalent precisely when they lie in the same $(\fS_n \times \fS_n)$-orbit. We have $((\fS_n \times \fS_n) \backslash X)^{C_2} = T(n, m) / {\sim}$.
\end{example}

For an arbitrary finite $(G \wr C_2)$-set $X$ as before, define
\begin{equation*}
    N(g) = \# \{x \in X \mid g x^\sT = x\} .
\end{equation*}
For $s \in X$, let $\Stab_{G \times G}(s)$ denote the stabilizer of $s$ under the $(G \times G)$-action and define
\begin{equation*}
    N^s(g) = \# \{x \in (G \times G) \cdot s \mid g x^\sT = x \} .
\end{equation*}
It is easy to see that $N$ and $N^s$ are class functions of $G$, and $N^s$ only depends on the $(G \times G)$-orbit of $s$. We can say more:

\begin{proposition}\label{prop:Ns_inner_prod}
    For $s \in X$, $N^s$ is a virtual character of $G$. Moreover if $\chi$ is an irreducible character of $G$, then
    \begin{equation*}
        \langle \chi, N^s \rangle = \frac{1}{\abs{\Stab_{G \times G}(s)}} \sum_{\substack{g, h \in G \\ g s^\sT h^{-1} = s}} \chi(gh) .
    \end{equation*}
\end{proposition}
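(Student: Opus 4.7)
The plan is to recognize $N^s$ as the twist $\psi^\sT$ of the permutation character of a suitable $(G \wr C_2)$-set, so that \cref{prop:twisted_virtual_char} immediately gives that $N^s$ is a virtual character, and then to derive the closed-form inner product by a direct double count after a careful change of variables. First I would dispatch the degenerate case $s^\sT \not\sim s$: if $g x^\sT = x$, then $x^\sT = g^{-1} x$ lies in the left $G$-orbit of $x$, hence in $(G \times G) \cdot s$, contradicting $s^\sT \not\sim s$. So in this case $N^s \equiv 0$ and the right-hand side of the proposed formula is an empty sum, so both sides are $0$. Assuming henceforth $s^\sT \sim s$, the orbit $Y := (G \times G) \cdot s$ is stable under $\sT$ and hence is a $(G \wr C_2)$-subset of $X$. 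Letting $\psi$ be its permutation character, the definitions immediately give $\psi^\sT = N^s$, and \cref{prop:twisted_virtual_char} yields the virtual character claim.

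For the inner product formula, I would compute
\begin{equation*}
    \langle \chi, N^s \rangle = \frac{1}{\abs{G}} \sum_{g \in G} \chi(g) \, \# \{x \in Y : g x^\sT = x\}
\end{equation*}
by swapping the two summations and parametrizing $Y$ through the $\abs{\Stab_{G \times G}(s)}$-to-$1$ surjection $(a, b) \mapsto a s b^{-1}$. Using the transpose identity, the condition $g x^\sT = x$ becomes $g b s^\sT a^{-1} = a s b^{-1}$. The key substitution is $g = a g' b^{-1}$, a bijection $G \to G$ for each fixed $(a, b)$; it converts the condition into $g' s^\sT h^{-1} = s$ with $h := b^{-1} a$, while $\chi(g) = \chi(a g' b^{-1}) = \chi(g' h)$ by cyclic invariance of the character. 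Regrouping the $(a, b)$-sum by fibers over $h$ contributes a factor of $\abs{G}$ (one free parameter $a$, with $b = a h^{-1}$), which cancels the $1/\abs{G}$ in front and produces precisely the claimed formula.

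The main obstacle is the bookkeeping in this change of variables: balancing the left and right $G$-actions against the transpose identity $(gxh)^\sT = h^{-1} x^\sT g^{-1}$ so that the rearranged condition and the character value $\chi(gh)$ in the proposition line up exactly. Once the correct substitution is chosen, the rest is mechanical counting, and one could alternatively derive the same formula by feeding the permutation character $\psi$ of $Y$ into \cref{prop:twisted_virtual_char} and then using Frobenius reciprocity to rewrite the two inner products on the right-hand side.
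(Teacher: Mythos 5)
Your proof is correct and matches the paper's intended route: when $s \sim s^\sT$ the orbit is a $(G \wr C_2)$-subset whose permutation character $\psi$ satisfies $\psi^\sT = N^s$, so \cref{prop:twisted_virtual_char} gives the virtual-character claim (this is exactly how the paper later invokes it for $N^\cC$), and your change of variables $g = a g' b^{-1}$, $h = b^{-1}a$ in the $\abs{\Stab_{G\times G}(s)}$-to-$1$ parametrization $(a,b)\mapsto asb^{-1}$ correctly produces the stated inner-product formula. The separate treatment of the degenerate case $s \not\sim s^\sT$, where both sides vanish, is also handled correctly.
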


Applying this to the trivial character of $G$, we obtain
\begin{corollary}\label{cor:N_inner_product_triv}
    For $s \in X$, we have
    \begin{equation*}
        \langle 1, N^s \rangle =
        \begin{cases}
            1 & \text{if } s \sim s^\sT \\
            0 & \text{otherwise} .
        \end{cases}
    \end{equation*}
\end{corollary}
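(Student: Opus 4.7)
The plan is a direct application of \cref{prop:Ns_inner_prod} with $\chi$ equal to the trivial character of $G$, followed by an orbit-stabilizer argument.

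First, I would take $\chi = 1$ in \cref{prop:Ns_inner_prod}. Since $\chi(gh) = 1$ for all $g, h$, the formula becomes
\begin{equation*}
    \langle 1, N^s \rangle = \frac{\#\{(g,h) \in G \times G \mid g s^\sT h^{-1} = s\}}{\abs{\Stab_{G \times G}(s)}}.
\end{equation*}
The key observation is that the condition $g s^\sT h^{-1} = s$ is exactly the statement $(g,h) \cdot s^\sT = s$ for the $(G \times G)$-action, recalling from the setup that $(g,h;1) \cdot x = g x h^{-1}$.

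Next I would split into two cases. If $s \not\sim s^\sT$, then $s$ and $s^\sT$ lie in different $(G \times G)$-orbits, so no pair $(g,h)$ can satisfy $(g,h) \cdot s^\sT = s$; the numerator is zero and hence $\langle 1, N^s \rangle = 0$. If instead $s \sim s^\sT$, pick any $(g_0, h_0)$ with $(g_0, h_0) \cdot s^\sT = s$. Then the solution set
\begin{equation*}
    \{(g,h) \in G \times G \mid (g,h) \cdot s^\sT = s\} = (g_0, h_0) \cdot \Stab_{G \times G}(s^\sT)
\end{equation*}
is a left coset of $\Stab_{G \times G}(s^\sT)$, so its cardinality equals $\abs{\Stab_{G \times G}(s^\sT)}$.

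Finally, since $s$ and $s^\sT$ lie in the same $(G \times G)$-orbit, their stabilizers are conjugate in $G \times G$ and in particular have the same cardinality, so the numerator equals the denominator and $\langle 1, N^s \rangle = 1$. There is no real obstacle here: once \cref{prop:Ns_inner_prod} is in hand, the corollary reduces to the standard fact that the fiber of the orbit map over a point has size equal to the stabilizer.
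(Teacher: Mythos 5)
Your proof is correct and follows the paper's route exactly: the paper obtains \cref{cor:N_inner_product_triv} by applying \cref{prop:Ns_inner_prod} to the trivial character, and your orbit--stabilizer computation (empty solution set when $s \not\sim s^\sT$, a coset of $\Stab_{G \times G}(s^\sT)$ of the same size as $\Stab_{G \times G}(s)$ otherwise) is precisely the intended justification, with the action convention $(g,h)\cdot x = gxh^{-1}$ handled correctly.
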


This gives the following generalization of the Cauchy--Frobenius lemma:
\begin{theorem}\label{thm:not_burnside_analog}
    $N$ is a virtual character of $G$. Furthermore, we have
    \begin{equation*}
        \# ((G \times G) \backslash X)^{C_2} = \langle 1, N \rangle = \frac{1}{\abs{G}} \sum_{g \in G} N(g) .
    \end{equation*}
\end{theorem}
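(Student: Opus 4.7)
The plan is to reduce the theorem to \cref{prop:Ns_inner_prod} and \cref{cor:N_inner_product_triv} by decomposing $N$ according to the $(G \times G)$-orbit structure of $X$. First I would choose a representative $s_\mathcal{O}$ from each orbit $\mathcal{O} \in (G \times G) \backslash X$. Since the sets $\mathcal{O}$ partition $X$, for any $g \in G$ the sets $\{x \in \mathcal{O} : g x^\sT = x\}$ partition $\{x \in X : g x^\sT = x\}$, giving
\begin{equation*}
    N(g) = \sum_{\mathcal{O} \in (G \times G) \backslash X} N^{s_\mathcal{O}}(g).
\end{equation*}
Each summand is a virtual character of $G$ by \cref{prop:Ns_inner_prod}, so $N$ is a virtual character of $G$, establishing the first claim.

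Next I would compute $\langle 1, N \rangle_G$ by linearity and \cref{cor:N_inner_product_triv}:
\begin{equation*}
    \langle 1, N \rangle = \sum_{\mathcal{O}} \langle 1, N^{s_\mathcal{O}} \rangle = \# \{\mathcal{O} \in (G \times G) \backslash X \mid s_\mathcal{O} \sim s_\mathcal{O}^\sT\}.
\end{equation*}
The condition $s_\mathcal{O} \sim s_\mathcal{O}^\sT$ is precisely the statement $\mathcal{O}^\sT = \mathcal{O}$ in $(G \times G) \backslash X$ under the induced $C_2$-action, so the right-hand side equals $\# ((G \times G) \backslash X)^{C_2}$. The remaining identity $\langle 1, N \rangle = \frac{1}{|G|} \sum_{g \in G} N(g)$ is simply the definition of the inner product, since the trivial character takes the value $1$ everywhere.

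There is no real obstacle here: both the orbit decomposition of $N$ and the identification of $C_2$-fixed orbits with self-transpose orbits are formal, and all the analytic content has been absorbed into the earlier results. The only thing to double-check is that the $C_2$-action on $(G \times G) \backslash X$ is indeed well-defined (which was noted right after its definition in the text), so that "$s_\mathcal{O} \sim s_\mathcal{O}^\sT$" is independent of the chosen representative.
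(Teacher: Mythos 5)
Your proposal is correct and matches the paper's intended argument: the theorem is deduced from \cref{prop:Ns_inner_prod} and \cref{cor:N_inner_product_triv} by summing $N = \sum_{\mathcal{O}} N^{s_{\mathcal{O}}}$ over $(G \times G)$-orbits and identifying self-transpose orbits with $C_2$-fixed points of $(G \times G) \backslash X$. No gaps.
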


\begin{proof}[Proof of \cref{thm:combinatorial_interpretation}]
    The first equality of \cref{thm:combinatorial_interpretation} is a special case of \cref{thm:main_thm} with $\lambda = n$. The second equality follows immediately from \cref{thm:not_burnside_analog} with $G$ and $X$ as in \cref{ex:GX}.
\end{proof}

\begin{remark}
    The equality $\# T(n, m) / {\sim} = \sum_{\nu \vdash nm} a_{n,m}^\nu$ was the original motivation for this paper. OEIS sequence \href{https://oeis.org/A333737}{A333737} gives the number of non-negative integer symmetric matrices with equal row sums, up to permutation equivalence. This is related to, but not exactly the sequence we have, as $n \times n$ matrices that are permutation equivalent to their transpose need not be permutation equivalent to a symmetric matrix when $n \ge 6$. See \cite[Example 1]{craigen_transpose} for a counterexample.
\end{remark}

\begin{remark}
    A related quantity to \eqref{eqn:pleth_sum} is the number of irreducibles in the decomposition of $(s_m)^n$. This is equal to $N^m(1_{\fS_n})$, i.e.~the number of $n \times n$ non-negative integer symmetric matrices with row sums equal to $m$. This follows from \cref{thm:main_thm} and \cite[Corollary 7.12.5]{stanley_ec2}. Alternatively, a bijective proof can be given by using Young's rule (take $\mu = m^n$ in \cite[Corollary 7.12.4]{stanley_ec2}) and the RSK algorithm \cite[Theorems 7.11.5, 7.13.1]{stanley_ec2}.
\end{remark}

\section{Properties of \texorpdfstring{$N^m$}{Nm}}\label{sec:properties}

Now we shall describe some properties of the characters $N^m$. For $\sigma \in \fS_n$, we write $N_\sigma(m)$ for $N^m(\sigma)$ when we want to emphasize that it is a function of $m$.  If $\sigma$ is of cycle type $\rho \vdash n$, we also set $N^m(\rho) = N_\rho(m) = N^m(\sigma)$. Thus, \cref{thm:main_thm} asserts that
\begin{equation*}
    \sum_{\nu \vdash nm} a_{\lambda, m}^\nu = \sum_{\rho \vdash n} z_\rho^{-1} \chi^\lambda(\rho) N^m(\rho) ,
\end{equation*}
where $z_\rho = \prod_{i \ge 1} i^{m_i} m_i!$, with $m_i$ being the number of parts of $\rho$ equal to $i$.

\subsection{Lattice points in polytopes}

We study the $N_\sigma$ as defined above from the point of view of Ehrhart theory, see \cite{beck_robins_polyhedra} or \cite[Ch.~4]{stanley_ec1} for an introduction to this subject.

Recall that a \emph{rational convex polytope} $\cP$ is the convex hull of finitely many points with rational coordinates in some Euclidean space $\RR^n$. Given such a polytope $\cP$, let $L_\cP(t) = \#(t \cP \cap \ZZ^n)$ and let $\Vol(\cP)$ denote its relative volume. A \emph{quasipolynomial of degree $n$} is a function $f : \ZZ \to \CC$ of the form $f(t) = c_n(t) t^n + \ldots + c_1(t) t + c_0(t)$ where $c_i : \ZZ \to \CC$ are periodic functions with $c_n \neq 0$. By Ehrhart's theorem \cite[Theorem 3.23]{beck_robins_polyhedra}, $L_\cP$ is a quasipolynomial, known as the \emph{Ehrhart quasipolynomial} of $\cP$, of degree equal to the dimension of $\cP$.

We now apply this theory in our context. For a set $S$, let $M_n(S)$ denote the set of $n \times n$ matrices with entries in $S$. For $\sigma \in \fS_n$, define the rational convex polytope
\begin{equation*}
    \cP(\sigma) = \{A \in M_n(\RR_{\ge 0}) \mid A \text{ has row sums equal to } 1 \text{ and } \sigma A^\sT = A \} \subseteq M_n(\RR) .
\end{equation*}
Then for $\sigma \in \fS_n$, $N_\sigma(m) = L_{\cP(\sigma)}(m)$, so $N_\sigma$ is a quasipolynomial of degree equal to the dimension of $\cP(\sigma)$. We have

\begin{proposition}
    For $\rho \vdash n$, the degree of $N_\rho$ is
    \begin{equation*}
        \deg N_\rho = \sum_{1 \le i < j \le \ell(\rho)} \gcd(\rho_i, \rho_j) + \sum_{1 \le i \le \ell(\rho)} \left\lfloor \frac{\rho_i - 1}{2} \right\rfloor .
    \end{equation*}
\end{proposition}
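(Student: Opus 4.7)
The plan is to apply Ehrhart's theorem, which reduces the problem to computing $\dim \cP(\sigma)$ for any $\sigma \in \fS_n$ of cycle type $\rho$. I would first observe that the uniform matrix $\tfrac{1}{n} J$ (all entries $\tfrac{1}{n}$) lies in $\cP(\sigma)$ with all coordinates strictly positive, so $\cP(\sigma)$ is full-dimensional in its affine span
\[
L = \{A \in M_n(\RR) : \sigma A^\sT = A \text{ and each row sum equals } 1\}.
\]
It therefore suffices to compute $\dim L$ and, in turn, $\dim V$ for $V = \{A \in M_n(\RR) : \sigma A^\sT = A\}$.

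Since $\sigma A^\sT = A$ is equivalent to $A_{ab} = A_{\sigma(b), a}$, the space $V$ is the space of functions on $\{1, \ldots, n\}^2$ constant on orbits of the map $T(a, b) = (\sigma(b), a)$. The key observation is that $T^2(a, b) = (\sigma(a), \sigma(b))$ is the diagonal $\sigma$-action, so every $T$-orbit is either one diagonal $\sigma$-orbit or a union of two. Solving $(\sigma(b), a) = (\sigma^k(a), \sigma^k(b))$ shows the two coincide iff $\sigma^{2k-1}(a) = a$ and $b = \sigma^{k-1}(a)$ for some $k$.

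Next I would split by the cycles $C_1, \ldots, C_{\ell(\rho)}$ of $\sigma$ (with $|C_p| = \rho_p$). For across-cycle pairs $a \in C_p$, $b \in C_q$ with $p \ne q$, no collapse is possible because $T$ swaps the disjoint blocks $C_p \times C_q$ and $C_q \times C_p$; each $T$-orbit therefore has size $2\,\mathrm{lcm}(\rho_p, \rho_q)$, yielding $\gcd(\rho_p, \rho_q)$ orbits per unordered pair $\{p, q\}$. For within-cycle pairs in $C_p$ of length $\ell$, collapse requires $\ell \mid 2k-1$, hence $\ell$ odd: exactly the $\ell$ pairs $(a, \sigma^{(\ell-1)/2}(a))$ form a single $T$-orbit of size $\ell$, and the remaining $\ell(\ell-1)$ pairs form $(\ell-1)/2$ orbits of size $2\ell$. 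In either parity of $\ell$ the count is $\lceil \ell/2 \rceil$ orbits.

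Finally, since $A_{ab} = A_{\sigma(b), a}$ forces $R_a = R_{\sigma(a)}$ on $V$, only $\ell(\rho)$ row sum equations are distinct; their independence follows from the block matrices $A^{(p)} = \tfrac{1}{\rho_p} \mathbf{1}_{C_p \times C_p} \in V$, which have row sum $1$ on $C_p$ and $0$ elsewhere. Subtracting gives
\[
\dim \cP(\sigma) = \dim V - \ell(\rho) = \sum_p \bigl(\lceil \rho_p / 2 \rceil - 1\bigr) + \sum_{p < q} \gcd(\rho_p, \rho_q),
\]
which matches the claimed formula via the identity $\lceil \rho_p / 2 \rceil - 1 = \lfloor (\rho_p - 1)/2 \rfloor$. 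I expect the main obstacle to be the within-cycle orbit count, where the parity-dependent collapse must be tracked carefully.
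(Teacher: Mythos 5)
Your proof is correct and follows exactly the route the paper sets up for this proposition: by Ehrhart's theorem the degree equals $\dim \cP(\sigma)$, which you compute by counting orbits of the twist map $T(a,b) = (\sigma(b), a)$ on $\{1,\ldots,n\}^2$ (using $T^2 = $ diagonal $\sigma$-action, with the within-cycle collapse occurring precisely for odd cycle lengths) and then subtracting the $\ell(\rho)$ row-sum constraints, whose independence on $V$ you verify with the block matrices $A^{(p)}$. The paper, being an extended abstract, omits the detailed proof, but your dimension computation is the intended argument and all the steps (the interior point $\tfrac{1}{n}J$, the orbit counts $\gcd(\rho_p,\rho_q)$ and $\lceil \rho_p/2 \rceil$, and the identity $\lceil \rho_p/2\rceil - 1 = \lfloor (\rho_p-1)/2 \rfloor$) check out.
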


\begin{corollary}\label{cor:deg_N_parity}
    For $\rho \vdash n$, we have $(-1)^{\deg N_\rho} = (-1)^{\frac{n(n-1)}{2}} \varepsilon_\rho^{n-1}$, where $\varepsilon_\rho = (-1)^{n - \ell(\rho)}$.
\end{corollary}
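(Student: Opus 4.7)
The plan is to reduce the claimed identity modulo $2$ and verify the resulting parity congruence directly from the formula for $\deg N_\rho$ established in the preceding proposition. Let $e$ denote the number of even parts of $\rho$ and set $o = \ell(\rho) - e$. I would first rewrite $\varepsilon_\rho$ in terms of $e$: since $n - \ell(\rho) = \sum_i (\rho_i - 1)$ and $\rho_i - 1$ is odd exactly when $\rho_i$ is even, we have $n - \ell(\rho) \equiv e \pmod 2$. Therefore $\varepsilon_\rho^{\,n-1} = (-1)^{e(n-1)}$, and the corollary reduces to the congruence
\[
\deg N_\rho \;\equiv\; \binom{n}{2} + e(n-1) \pmod 2 .
\]

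Next, I would reduce each summand of the formula for $\deg N_\rho$ modulo $2$. The greatest common divisor $\gcd(\rho_i, \rho_j)$ is odd iff at least one of the two parts is odd, so
\[
\sum_{1 \le i < j \le \ell(\rho)} \gcd(\rho_i, \rho_j) \;\equiv\; \binom{\ell(\rho)}{2} - \binom{e}{2} \pmod 2 .
\]
For the floor sum, splitting by parity gives $\lfloor(\rho_i - 1)/2\rfloor = (\rho_i - 1)/2$ when $\rho_i$ is odd and $\rho_i/2 - 1$ when $\rho_i$ is even. Summing over $i$, and writing $S_e = \sum_{\rho_i \text{ even}} \rho_i = n - o$ for the total of the even parts, one obtains the exact identity
\[
\sum_{1 \le i \le \ell(\rho)} \left\lfloor \frac{\rho_i - 1}{2} \right\rfloor \;=\; \frac{S_e}{2} - e .
\]

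Combining the two summands and applying the standard identities $\binom{o + e}{2} = \binom{o}{2} + \binom{e}{2} + oe$ and $\binom{o + S_e}{2} = \binom{o}{2} + \binom{S_e}{2} + o S_e$ (recalling $\ell(\rho) = o + e$ and $n = o + S_e$), the target congruence collapses modulo $2$, after cancelling $2\binom{e}{2}$ and using that $S_e$ is even (so $\ell(\rho) \cdot S_e \equiv 0 \pmod 2$), to the single identity $\binom{S_e}{2} \equiv S_e/2 \pmod 2$. This holds at once, since writing $S_e = 2k$ gives $\binom{2k}{2} = k(2k - 1) \equiv k \pmod 2$. The argument is routine parity bookkeeping rather than anything conceptual; the only step requiring care is exploiting the automatic evenness of $S_e$, which is precisely what makes the remaining terms collapse.
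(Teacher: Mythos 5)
Your route is the natural one and is essentially the paper's: the corollary is stated as a direct consequence of the degree formula, and your argument is exactly that parity computation (gcd term contributing $\binom{\ell(\rho)}{2}-\binom{e}{2}$ mod $2$, floor term split by parity of the parts, final collapse using evenness). One slip to repair: $\sum_{\rho_i\ \mathrm{even}}\rho_i$ is \emph{not} equal to $n-o$ in general (try $\rho=(3,2)$); they agree only mod $2$. The quantity for which your ``exact identity'' $\sum_i \bigl\lfloor (\rho_i-1)/2 \bigr\rfloor = S_e/2 - e$ and your substitution $n = o + S_e$ both hold literally is $S_e := n-o$, which is even because $\sum_{\rho_i\ \mathrm{odd}}\rho_i \equiv o \pmod 2$. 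Taking that as the definition of $S_e$ (and noting it is congruent mod $2$ to the sum of the even parts, so your computation of $\varepsilon_\rho$ is unaffected), every step of your bookkeeping is correct and the reduction to $\binom{S_e}{2}\equiv S_e/2 \pmod 2$ goes through unchanged.
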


\begin{corollary}
    For $\rho \vdash n$, we have $\deg N_\rho \le \frac{n(n-1)}{2}$ with equality if and only if $\rho = 1^n$.
\end{corollary}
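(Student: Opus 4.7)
The plan is to compare $\deg N_\rho$ term-by-term against $\binom{n}{2}$ using a combinatorial decomposition of $\binom{n}{2}$ induced by the partition $\rho$. Since $n = \sum_i \rho_i$, partitioning an $n$-element set into blocks of sizes $\rho_1, \ldots, \rho_{\ell(\rho)}$ and counting unordered pairs according to whether both elements lie in the same block gives the identity
\begin{equation*}
\binom{n}{2} = \sum_{1 \le i < j \le \ell(\rho)} \rho_i \rho_j + \sum_{i=1}^{\ell(\rho)} \binom{\rho_i}{2}.
\end{equation*}
I would then subtract the formula from the previous proposition to obtain
\begin{equation*}
\binom{n}{2} - \deg N_\rho = \sum_{i<j}\bigl(\rho_i \rho_j - \gcd(\rho_i, \rho_j)\bigr) + \sum_{i}\left(\binom{\rho_i}{2} - \left\lfloor \frac{\rho_i - 1}{2}\right\rfloor\right).
\end{equation*}

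The next step is to verify that every term in both sums is nonnegative, and to identify exactly when each vanishes. For the first sum, $\rho_i \rho_j \ge \gcd(\rho_i, \rho_j)$ always, and the inequality is strict unless $\rho_i = \rho_j = 1$ (either $\rho_i \rho_j \ge 2$ forces strict inequality when one part exceeds $1$, or $\rho_i \rho_j = 1$ makes the term vanish). For the second sum, a direct check shows $\binom{\rho_i}{2} \ge \lfloor (\rho_i - 1)/2 \rfloor$ with equality only when $\rho_i = 1$, since $\binom{\rho_i}{2} \ge 1$ and $\lfloor (\rho_i-1)/2 \rfloor = 0$ when $\rho_i = 2$, and in general $\rho_i(\rho_i - 1)/2$ dominates $(\rho_i - 1)/2$ whenever $\rho_i \ge 2$.

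Combining these facts gives $\deg N_\rho \le \binom{n}{2}$, with equality forcing every $\rho_i$ to equal $1$, that is $\rho = 1^n$. Conversely, substituting $\rho = 1^n$ directly into the formula yields $\sum_{i<j} 1 + 0 = \binom{n}{2}$, confirming the equality case. I expect no real obstacle here: the content of the argument is the clean decomposition of $\binom{n}{2}$ matching the two sums in the degree formula, after which the rest reduces to elementary inequalities on positive integers.
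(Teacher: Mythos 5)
Your proof is correct and is essentially the intended argument: the paper derives this corollary directly from the degree formula in the preceding proposition (omitting details in the extended abstract), via exactly the comparison $\gcd(\rho_i,\rho_j)\le\rho_i\rho_j$ and $\lfloor(\rho_i-1)/2\rfloor\le\binom{\rho_i}{2}$ against the decomposition $\binom{n}{2}=\sum_{i<j}\rho_i\rho_j+\sum_i\binom{\rho_i}{2}$, with the equality analysis you give. No gaps.
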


We deduce from this and a similar analysis of the second-highest degree term the following asymptotics for \eqref{eqn:pleth_sum}, c.f.~\cite[Theorem 3.7 (i)]{fulger_asymptotics}:
\begin{theorem}\label{thm:asymptotics}
    For $\lambda \vdash n$, as $m \to \infty$, we have
    \begin{enumerate}[label=(\roman*)]
        \item $\displaystyle \sum_{\nu \vdash nm} a_{\lambda,m}^\nu \sim \frac{\chi^\lambda(1)}{n!} \Vol(\cP(1_{\fS_n})) m^{\frac{n(n-1)}{2}}$; and
        \item $\displaystyle \sum_{\nu \vdash nm} a_{\lambda,m}^\nu = \chi^\lambda(1) \sum_{\nu \vdash nm} a_{n,m}^\nu + O\left(m^{\frac{(n-1)(n-2)}{2}}\right)$.
    \end{enumerate}
\end{theorem}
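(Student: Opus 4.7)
The plan is to expand $\sum_{\nu \vdash nm} a_{\lambda,m}^\nu$ via \cref{thm:main_thm}, grouping the sum over $\fS_n$ into conjugacy classes, to obtain
\begin{equation*}
    \sum_{\nu \vdash nm} a_{\lambda,m}^\nu = \sum_{\rho \vdash n} \frac{\chi^\lambda(\rho)}{z_\rho} N_\rho(m).
\end{equation*}
By Ehrhart's theorem, each $N_\rho(m)$ has degree $\deg N_\rho$ and leading coefficient $\Vol(\cP(\sigma))$ for any $\sigma$ of cycle type $\rho$, so the asymptotics are controlled by the partitions of maximal degree. For part (i), the corollary above gives $\deg N_\rho \le n(n-1)/2$ with equality only at $\rho = 1^n$, and since $z_{1^n} = n!$ and $\chi^\lambda(1^n) = \chi^\lambda(1)$, the identity class contributes the leading asymptotic $\frac{\chi^\lambda(1)}{n!} \Vol(\cP(1_{\fS_n})) m^{n(n-1)/2}$.

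For part (ii), applying the same expansion to $\sum_{\nu \vdash nm} a_{n,m}^\nu$ (with $\chi^n \equiv 1$) yields
\begin{equation*}
    \sum_{\nu \vdash nm} a_{\lambda,m}^\nu - \chi^\lambda(1) \sum_{\nu \vdash nm} a_{n,m}^\nu = \sum_{\rho \ne 1^n} \frac{\chi^\lambda(\rho) - \chi^\lambda(1)}{z_\rho} N_\rho(m),
\end{equation*}
so it suffices to show $\deg N_\rho \le (n-1)(n-2)/2$ for every $\rho \ne 1^n$. Using the identity $\binom{n}{2} = \sum_i \binom{\rho_i}{2} + \sum_{i<j} \rho_i \rho_j$, this rewrites as
\begin{equation*}
    \binom{n}{2} - \deg N_\rho = \sum_i \left( \binom{\rho_i}{2} - \left\lfloor \frac{\rho_i - 1}{2} \right\rfloor \right) + \sum_{i<j} \bigl( \rho_i \rho_j - \gcd(\rho_i, \rho_j) \bigr) \ge n - 1,
\end{equation*}
each summand being non-negative and vanishing only when all of its indexed parts equal $1$.

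The main obstacle is this last inequality. I would split on $s$, the number of parts of $\rho$ that exceed $1$. If $s = 1$ with unique large part $k \ge 2$, the left-hand side simplifies to $\binom{k}{2} - \lfloor (k-1)/2 \rfloor + (n-k)(k-1)$, which a direct check shows is minimized at $k = 2$, attaining exactly $n - 1$. If $s \ge 2$, the two largest self-contributions already sum to at least $2$, the cross-term $\rho_1 \rho_2 - \gcd(\rho_1, \rho_2)$ is at least $2$, and the cross-terms against any parts equal to $1$ readily push the total past $n - 1$. Equality is attained only at $\rho = (2, 1^{n-2})$, confirming that $(n-1)(n-2)/2$ is the sharp next-to-top degree and that the error term in (ii) cannot be improved in general.
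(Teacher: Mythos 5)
Your argument is correct and follows essentially the same route the paper sketches: expand \eqref{eqn:pleth_sum} over cycle types via \cref{thm:main_thm}, let the identity class (degree $\tfrac{n(n-1)}{2}$, leading coefficient $\Vol(\cP(1_{\fS_n}))$, weight $\chi^\lambda(1)/n!$) give part (i), and for part (ii) use the degree formula together with $\binom{n}{2}=\sum_i\binom{\rho_i}{2}+\sum_{i<j}\rho_i\rho_j$ to show $\deg N_\rho\le\tfrac{(n-1)(n-2)}{2}$ for all $\rho\neq 1^n$, which your case analysis on the number of parts exceeding $1$ does establish. Only your closing side remark overreaches slightly --- for $n=3$ the partition $(3)$ also attains degree $\tfrac{(n-1)(n-2)}{2}$, and for $\lambda=n$ the difference in (ii) vanishes identically, so ``equality only at $(2,1^{n-2})$'' and unconditional sharpness are not quite right --- but neither point is needed for the theorem.
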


The fact that $N_\sigma$ is a quasipolynomial allows us to extend its definition to all integers, and hence define a class function $N^m$ for all $m \in \ZZ$.
\begin{proposition}\label{prop:N_neg}
    For $n \ge 1$ and $m \in \ZZ$, we have
    \begin{equation*}
        N^m = (-1)^{\frac{n(n-1)}{2}} \sgn^{n-1} \cdot N^{-m - n}
    \end{equation*}
    as virtual characters of $\fS_n$. Furthermore, $N^m = 0$ for $-n+1 \le m \le -1$.
\end{proposition}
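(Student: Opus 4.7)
The plan is to deduce both statements from Ehrhart--Macdonald reciprocity applied to the polytope $\cP(\sigma)$, combined with a translation argument. Recall that for a rational convex polytope $\cP$, Ehrhart--Macdonald reciprocity (see \cite{beck_robins_polyhedra}) asserts that $L_{\cP}(-m) = (-1)^{\dim \cP} L_{\cP^\circ}(m)$, where $L_{\cP^\circ}(m)$ counts lattice points in $m$ times the relative interior of $\cP$. Since $\frac{1}{n} J$ (where $J$ is the all-ones matrix) lies in $\cP(\sigma)$ with all coordinates strictly positive, it is an interior point relative to the affine subspace cut out by the equality constraints. Hence the affine span of $\cP(\sigma)$ is precisely this subspace, and its relative interior consists of the matrices in $\cP(\sigma)$ with all entries strictly positive.

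Next, I would exploit the fact that $J$ itself satisfies $\sigma J^\sT = J$ (it is fixed by any permutation of rows or columns, and $J = J^\sT$). Translating by $J$ therefore gives a bijection between strictly positive integer matrices $A$ with row sums equal to $m$ satisfying $\sigma A^\sT = A$, and non-negative integer matrices $B = A - J$ with row sums equal to $m - n$ satisfying $\sigma B^\sT = B$. This yields $L_{\cP(\sigma)^\circ}(m) = N_\sigma(m - n)$, so that $N_\sigma(-m) = (-1)^{\dim \cP(\sigma)} N_\sigma(m - n)$. Both sides are quasipolynomials in $m$, so the identity extends to all $m \in \ZZ$; substituting $m \to -m$ gives $N_\sigma(m) = (-1)^{\dim \cP(\sigma)} N_\sigma(-m - n)$. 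Since $\dim \cP(\sigma) = \deg N_\sigma$, \cref{cor:deg_N_parity} rewrites the sign as $(-1)^{n(n-1)/2} \sgn(\sigma)^{n-1}$, proving the first assertion.

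For the vanishing statement, the same identity $N_\sigma(m) = \pm L_{\cP(\sigma)^\circ}(m + n)$ shows that $\pm N^m(\sigma)$ counts integer matrices with all entries at least $1$, row sums equal to $m + n$, and $\sigma A^\sT = A$. When $-n + 1 \le m \le -1$, we have $1 \le m + n \le n - 1$, but each row then has $n$ entries each at least $1$ and hence row sum at least $n$; no such matrix exists, so $N^m(\sigma) = 0$. The main subtlety is verifying that the relative interior of $\cP(\sigma)$ really is cut out by making the inequalities $A_{ij} \ge 0$ strict, which amounts to checking that none of these inequalities is forced to be an equality by the equality constraints on $\cP(\sigma)$; this is secured by the strictly positive point $\frac{1}{n} J$.
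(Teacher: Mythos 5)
Your proof is correct and follows essentially the same route as the paper, which only sketches the argument as ``Ehrhart--Macdonald reciprocity plus \cref{cor:deg_N_parity}'': your translation by the all-ones matrix $J$ identifying interior lattice points of $t\cP(\sigma)$ with points of $(t-n)\cP(\sigma)$, and the use of the strictly positive point $\tfrac{1}{n}J$ to pin down the relative interior, are exactly the details that sketch leaves implicit. The only cosmetic blemish is the spurious $\pm$ in $N_\sigma(m)=\pm L_{\cP(\sigma)^\circ}(m+n)$ (the sign is in fact $+$), which is harmless since the count is zero in the range $-n+1\le m\le -1$.
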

\begin{proof}[Proof Sketch]
    This follows by Ehrhart--Macdonald reciprocity \cite[Theorem 4.1]{beck_robins_polyhedra} and \cref{cor:deg_N_parity}.
\end{proof}

\subsection{A decomposition of \texorpdfstring{$N^m$}{Nm}}

Let $n, m \ge 1$ be integers. As in \cref{sec:wr_sets}, there is a natural way to decompose the character $N^m$ of $\fS_n$ as a sum
\begin{equation*}
    N^m = \sum_{\cC \in T(n, m) / {\sim}} N^\cC ,
\end{equation*}
where for $\cC \in T(n, m) / {\sim}$, we set $N^\cC(\sigma) = \# \{A \in \cC \mid \sigma A^\sT = A \}$.
If $S \in \cC$, then in the notation of \cref{sec:wr_sets}, $N^\cC = N^S$ and it is a virtual character of $\fS_n$ by virtue of \cref{prop:twisted_virtual_char}. \cref{thm:main_thm} implies that for $\lambda \vdash n$,
\begin{equation*}\label{eqn:N_decomp}
    \sum_{\nu \vdash nm} a_{\lambda, m}^\nu = \sum_{\cC \in T(n, m) / {\sim}} \langle \chi^\lambda, N^\cC \rangle .
\end{equation*}
This allows us to measure the contribution of each $\cC \in T(n, m) / {\sim}$ to the sum \eqref{eqn:pleth_sum}.
\begin{proposition}\label{prop:NC_triv}
    For $\cC \in T(n, m) / {\sim}$, we have $\langle 1, N^\cC \rangle = 1$.
\end{proposition}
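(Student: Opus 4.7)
The plan is to deduce this immediately from \cref{cor:N_inner_product_triv}. The key identification is that $N^\cC$, as defined in Section 4.2 for $\cC \in T(n,m)/{\sim}$, coincides with the class function $N^S$ from Section 3.2 for any representative $S \in \cC$; both count the matrices $A$ in the $(\fS_n \times \fS_n)$-orbit $\cC = (\fS_n \times \fS_n) \cdot S$ that satisfy $\sigma A^\sT = A$.

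With this identification, I would pick any representative $S \in \cC$. Since $\cC$ is an equivalence class in $T(n,m)/{\sim}$, the definition of $T(n,m)$ guarantees $S \sim S^\sT$ under the $(\fS_n \times \fS_n)$-action on $M(n,m)$ from \cref{ex:GX}. Applying \cref{cor:N_inner_product_triv} with $G = \fS_n$, $X = M(n,m)$, and $s = S$ then yields
\begin{equation*}
    \langle 1, N^\cC \rangle = \langle 1, N^S \rangle = 1,
\end{equation*}
which is the claim.

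The proposition is therefore a direct specialization of the general framework developed in Section 3, so I do not anticipate a substantive obstacle. All the real work sits in \cref{prop:Ns_inner_prod} and its trivial-character specialization \cref{cor:N_inner_product_triv}; for the present statement one only needs to recognize the correct $(G \wr C_2)$-set (namely \cref{ex:GX}) and a representative of $\cC$. As a sanity check, summing this identity over $\cC \in T(n,m)/{\sim}$ and using the decomposition $N^m = \sum_\cC N^\cC$ recovers $\langle 1, N^m\rangle = \#\, T(n,m)/{\sim}$, consistent with \cref{thm:combinatorial_interpretation}.
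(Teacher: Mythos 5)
Your proof is correct and is exactly the argument the paper intends: Section 4.2 already identifies $N^\cC$ with $N^S$ for a representative $S \in \cC$, and since $S \sim S^\sT$ by definition of $T(n,m)$, \cref{cor:N_inner_product_triv} (applied to the $(\fS_n \wr C_2)$-set of \cref{ex:GX}) gives $\langle 1, N^\cC \rangle = 1$. No differences from the paper's route worth noting.
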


\begin{remark}
    If $\cC_1 \in T(n, m_1) / {\sim}$ and $\cC_2 \in T(n, m_2) / {\sim}$ are such that there are $A_1 \in \cC_1, A_2 \in \cC_2$, and a bijection $f : \NN \to \NN$ such that $f(A_1) = A_2$, then $N^{\cC_1} = N^{\cC_2}$.
\end{remark}

\begin{remark}\label{rem:NC_virtual}
    Unfortunately $N^\cC$ is not a character in general, for example for the matrix in \cite[Example 1]{craigen_transpose}. By \cref{prop:Ns_inner_prod}, we see that $-\chi^\lambda(1) \le \langle \chi^\lambda, N^\cC \rangle \le \chi^\lambda(1)$. By \cref{thm:combinatorial_interpretation} and \cref{thm:asymptotics},
    \begin{equation*}
        \lim_{m \to \infty} \frac{1}{\abs{T(n,m) / {\sim}}} \sum_{\cC \in T(n, m) / {\sim}} \langle \chi^\lambda, N^\cC \rangle = \chi^\lambda(1) .
    \end{equation*}
    Thus when $m$ is large, for most $\cC \in T(n, m) / {\sim}$, $N^\cC$ will be the character of the regular representation of $\fS_n$.
\end{remark}

\section{The case \texorpdfstring{$m=2$}{m=2}}\label{sec:m2}

We conclude by using our results to study the case $m=2$ .

Let $n_1, n_2 \ge 1$. For $\cC_1 = [A_1] \in T(n_1, 2) / {\sim}$ and $\cC_2 = [A_2] \in T(n_2, 2) / {\sim}$, we define their sum $\cC_1 + \cC_2 \in T(n_1 + n_2, 2) / {\sim}$ to be the equivalence class of the block diagonal matrix $A_1 \oplus A_2$. We call $\cC \in T(n, 2) / {\sim}$ \emph{irreducible} if it cannot be written as a nontrivial sum $\cC_1 + \cC_2$. They are represented by matrices of the form

\begin{equation*}
    \begin{pmatrix}
        2
    \end{pmatrix},
    \begin{pmatrix}
        1 & 1\\
        1 & 1
    \end{pmatrix},
    \begin{pmatrix}
        1 & 1 &  \\
        1 &   & 1\\
          & 1 & 1
    \end{pmatrix},
    \begin{pmatrix}
        1 & 1 &   &  \\
        1 &   & 1 &  \\
          & 1 &   & 1\\
          &   & 1 & 1
    \end{pmatrix},
    \begin{pmatrix}
        1 & 1 &   &   &  \\
        1 &   & 1 &   &  \\
          & 1 &   & 1 &  \\
          &   & 1 &   & 1\\
          &   &   & 1 & 1\\
    \end{pmatrix}, \ldots
\end{equation*}
Furthermore, each $\cC \in T(n, 2) / {\sim}$ can be expressed as a sum of irreducibles, unique up to ordering. We thus have a bijection between $T(n, 2) / {\sim}$ and the set of partitions of $n$, sending an equivalence class $\cC$ to the partition $\lambda_\cC \vdash n$ recording the sizes of the irreducible summands.

\begin{example}
    If $\cC \in T(5,2) / {\sim}$ is the equivalence class of
    \begin{equation*}
        \begin{pmatrix}
            1 & 1 &   &   &   \\
            1 &   & 1 &   &   \\
              & 1 & 1 &   &   \\
              &   &   & 2 &   \\
              &   &   &   & 2
        \end{pmatrix},
    \end{equation*}
    then $\lambda_\cC = (3, 1, 1)$.
\end{example}

\begin{proposition}\label{prop:NC_inner_prod_2}
    For $\cC \in T(n, 2) / {\sim}$, we have
    \begin{equation*}
        \langle \sgn, N^\cC \rangle_{\fS_n} =
        \begin{cases}
            1 & \text{if all parts of } \lambda_\cC \text{ are odd} \\
            0 & \text{otherwise} .
        \end{cases}
    \end{equation*}
\end{proposition}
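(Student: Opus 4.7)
The plan is to apply \cref{prop:Ns_inner_prod} with $\chi = \sgn$ to a symmetric representative $s \in \cC$. Writing $\lambda_\cC = (\lambda_1, \ldots, \lambda_\ell)$ and letting $B_k$ denote the irreducible displayed above of size $k$, take $s = B_{\lambda_1} \oplus \cdots \oplus B_{\lambda_\ell}$; each $B_k$ is symmetric, so $s = s^\sT$. The condition $gs^\sT h^{-1} = s$ then becomes the stabilizer condition $(g, h) \in \Stab_{\fS_n \times \fS_n}(s)$, giving
\[
\langle \sgn, N^\cC \rangle = \frac{1}{|\Stab(s)|} \sum_{(g, h) \in \Stab(s)} \sgn(g)\sgn(h).
\]

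I then view $s$ as the biadjacency matrix of a bipartite multigraph on $L \sqcup R$, where each block $B_k$ contributes a $2k$-cycle component (with $k$ vertices in each of $L$ and $R$). Then $\Stab(s)$ is precisely the group of bipartition-preserving automorphisms of this disjoint union of even cycles, and $\sgn(g)\sgn(h) = \sgn(g)\sgn(h^{-1})$ equals the sign of the induced permutation of the $2n$ vertices of $L \sqcup R$. The task reduces to averaging this sign over the graph automorphism group.

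For a single $2k$-cycle with $k \ge 2$, the bipartition-preserving automorphism group has order $2k$: it comprises $k$ rotations by even amounts (each with an even number of equal-length cycles on $2k$ vertices, hence sign $+1$) and $k$ reflections through axes passing through opposite vertex pairs (each a product of $k-1$ transpositions with sign $(-1)^{k-1}$). The average sign is thus $(1 + (-1)^{k-1})/2$, equal to $1$ for $k$ odd and $0$ for $k$ even; the degenerate case $k=1$ gives the same value by direct check. The full stabilizer factors as $\Stab(s) = \prod_i (H_i \wr \fS_{m_i})$, where $H_i$ is the automorphism group above for a $2i$-cycle and $m_i$ is the multiplicity of $i$ in $\lambda_\cC$. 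In each wreath product, the block-permutation part $\tau \in \fS_{m_i}$ shuffles $2i$-vertex groups and contributes $\sgn(\tau)^{2i} = 1$ to the total sign, so the sign factors across all components:
\[
\langle \sgn, N^\cC \rangle = \prod_i \left(\frac{1+(-1)^{i-1}}{2}\right)^{m_i},
\]
which equals $1$ when every part of $\lambda_\cC$ is odd and $0$ otherwise.

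The main subtlety is the sign bookkeeping: identifying exactly which automorphisms of the $2k$-cycle preserve the bipartition (the rotations by even amounts and the vertex-axis reflections, but not the edge-axis reflections), and verifying that the block-permutation parts of the wreath products contribute trivially because $\sgn(\tau)^{2i} = 1$ for any $\tau$.
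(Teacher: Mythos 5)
Your proof is correct, and it proceeds exactly along the lines the paper's machinery is built for: apply \cref{prop:Ns_inner_prod} with $\chi=\sgn$ to the symmetric block-diagonal representative, so that $\langle\sgn,N^\cC\rangle$ becomes the average of the linear character $\sgn\boxtimes\sgn$ over $\Stab_{\fS_n\times\fS_n}(s)$, identified with the side-preserving automorphism group $\prod_i (H_i\wr\fS_{m_i})$ of a disjoint union of even cycles (the extended abstract omits its own proof, so there is nothing to contrast). Your sign bookkeeping checks out in every detail: even rotations of a $2k$-cycle split into an even number of equal-length cycles and have sign $+1$, the $k$ vertex-axis reflections are the only side-preserving reflections and have sign $(-1)^{k-1}$, the degenerate $k=1$ block (double edge, trivial automorphism group) gives $1$, and the block-permutation parts contribute $\sgn(\tau)^{2i}=1$, yielding $\prod_i\bigl(\tfrac{1+(-1)^{i-1}}{2}\bigr)^{m_i}$ as claimed.
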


\begin{corollary}\label{cor:N2}
    We have
    \begin{enumerate}[label=(\roman*)]
        \item $\displaystyle \langle 1, N^2 \rangle_{\fS_n} = \sum_{\nu \vdash 2n} a_{n, 2}^\nu = \#\{\text{partitions of } n\}$; and
        \item $\displaystyle \langle \sgn, N^2 \rangle_{\fS_n} = \sum_{\nu \vdash 2n} a_{1^n, 2}^\nu = \#\{\text{partitions of } n \text{ into odd parts}\}$.
    \end{enumerate}
\end{corollary}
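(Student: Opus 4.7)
The plan is to combine \cref{thm:main_thm} with the bijection $T(n,2)/{\sim} \leftrightarrow \{\text{partitions of } n\}$ established just before the statement, together with the inner product formulas of \cref{prop:NC_triv} and \cref{prop:NC_inner_prod_2}. In both parts, the equalities $\langle 1, N^2\rangle = \sum_\nu a_{n,2}^\nu$ and $\langle \sgn, N^2\rangle = \sum_\nu a_{1^n,2}^\nu$ are immediate from \cref{thm:main_thm}, using $\chi^n = 1$ and $\chi^{1^n} = \sgn$; so the real content lies in evaluating these two inner products combinatorially.

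For part (i), I would simply specialize \cref{thm:combinatorial_interpretation} to $m = 2$ to obtain $\langle 1, N^2 \rangle_{\fS_n} = \# T(n,2)/{\sim}$, and then apply the bijection $\cC \mapsto \lambda_\cC$ to identify this with the number of partitions of $n$. Alternatively, the same conclusion follows by decomposing $N^2 = \sum_\cC N^\cC$, pairing with the trivial character, and invoking \cref{prop:NC_triv} to see each summand contributes $1$.

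For part (ii), the analogous calculation uses \cref{prop:NC_inner_prod_2} in place of \cref{prop:NC_triv}. Pairing the decomposition $N^2 = \sum_{\cC} N^\cC$ with $\sgn$ and using \cref{prop:NC_inner_prod_2} yields
\[
\langle \sgn, N^2 \rangle_{\fS_n} = \sum_{\cC \in T(n,2)/{\sim}} \langle \sgn, N^\cC \rangle_{\fS_n} = \#\{\cC \in T(n,2)/{\sim} : \text{every part of } \lambda_\cC \text{ is odd}\},
\]
which, under the bijection $\cC \mapsto \lambda_\cC$, is exactly the number of partitions of $n$ into odd parts.

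There is no substantial obstacle: all genuinely nontrivial work has already been absorbed into \cref{thm:main_thm}, \cref{prop:NC_triv}, and \cref{prop:NC_inner_prod_2}, and the corollary reduces to a one-line assembly for each part. The only point requiring care is that the asserted unique decomposition of an element of $T(n,2)/{\sim}$ into irreducibles really does furnish the bijection with partitions of $n$, but this is stated (and visibly consistent with the list of irreducibles displayed) in the paragraph preceding the statement, so no further argument is needed.
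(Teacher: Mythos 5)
Your proposal is correct and follows the paper's intended route: the first equalities in both parts come from \cref{thm:main_thm} with $\chi^n = 1$, $\chi^{1^n} = \sgn$, part (i) is \cref{thm:combinatorial_interpretation} (equivalently, the decomposition $N^2 = \sum_{\cC} N^{\cC}$ paired with \cref{prop:NC_triv}) combined with the bijection $\cC \mapsto \lambda_\cC$, and part (ii) is the same decomposition paired with $\sgn$ via \cref{prop:NC_inner_prod_2}. This matches the paper's assembly of these ingredients, so nothing further is needed.
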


While the decompositions of $s_n[s_2]$ and $s_{1^n}[s_2]$ are well-known \cite[p.~138]{macdonald_symm_fn}, the above corollary was derived independently of these. As in \cref{rem:NC_virtual}, for general $m$, it is possible but rare that $\langle \sgn, N^\cC \rangle = -1$. We hope that a better understanding of this will lead to a combinatorial interpretation of \eqref{eqn:pleth_sum} for $\lambda = 1^n$.

\acknowledgements{%
The author would like to thank Thomas Lam for introducing them to subject of symmetric functions in a course taught at the University of Michigan during the Winter term of 2024, and for helpful discussions in preparing this paper. In this process, we have used the OEIS \cite{oeis} database, and computer algebra systems GAP \cite{GAP4} and SageMath \cite{sagemath} to perform computations.
}

\printbibliography

\end{document}